\theoremstyle{definition}
\newcommand{\N}{\mathbb{N}}
\newcommand{\R}{\mathbb{R}}
\newcommand{\HH}{\mathbb{H}}
\newtheorem{theorem}{Theorem}[section]
\newtheorem{remark}[theorem]{Remark}
\newtheorem{definition}[theorem]{Definition}
\newtheorem{proposition}[theorem]{Proposition}
\newtheorem{corollary}[theorem]{Corollary}
\newtheorem{lemma}[theorem]{Lemma}
\newcommand{\s}{{\sigma}}
\newcommand{\dd}{{\delta}}
\title{Fixed points and orbits in skew polynomial rings}
\author{Adam Chapman}
\email{adam1chapman@yahoo.com}
\address{School of Computer Science, Academic College of Tel-Aviv-Yaffo, Rabenu Yeruham St., P.O.B 8401 Yaffo, 6818211, Israel}
\author{Elad Paran}
\email{paran@openu.ac.il}
\address{Department of Mathematics and Computer Science, The Open University of Israel, 1 University Road, P. O. Box 808, Ra'anana 43107}
\keywords{Skew polynomials; Division Rings; Arithmetic Dynamics; Periodic Points; Noncommutative Algebra}  
\subjclass[2010]{primary 16S36; secondary 16K20, 37P35, 37C25}
\begin{document}

\maketitle

\begin{abstract} We study orbits and fixed points of polynomials in a general skew polynomial ring $D[x,\sigma, \delta]$. We extend results of the first author and Vishkautsan on polynomial dynamics in $D[x]$. In particular, we show that if $a \in D$ and $f \in D[x,\sigma,\delta]$ satisfy $f(a) = a$, then $f^{\circ n}(a) = a$ for every formal power of $f$. More generally, we give a sufficient condition for a point $a$ to be $r$-periodic with respect to a polynomial $f$. Our proofs build upon foundational results on skew polynomial rings due to Lam and Leroy. \end{abstract}

\section{Introduction}

The recent work \cite{CV21} studies discrete dynamics in the ring $D[x]$ of polynomials in a central variable $x$ over a division ring $D$. For a polynomial $f(x) = \sum f_i x^i \in D[x]$, one defines its formal powers $f^{\circ n}$ inductively, by $f^{\circ 0} = x$ and $f^{\circ (n+1)}(x) = \sum f_i (f^{\circ n})^i$. A point $a \in D$ is called $r$-periodic with respect to $f$, if $f^{\circ nr}(a) = a$ for all $n \geq 0$. It is demonstrated in \cite[Example 4.1]{CV21} that $f^{\circ r}(a) = a$ does not ensure that $a$ is $r$-periodic with respect to $f$. However, a fixed points theorem is given in \cite[Corollary 4.7]{CV21}: For $r = 1$, if $f(a) = a$ then $f^{\circ n}(a) = a$ holds, for all $n \geq 0$. 

The purpose of this note is to extend and generalize the results of \cite{CV21} for a general skew polynomial ring $R = D[x,\sigma, \delta]$, where $\sigma$ is an endomorphism of $D$ and $\delta$ is a $\sigma$-derivation. These rings were first systematically investigated by Ore in \cite{Ore33}, following an initial work by Noether and Schmeidler in \cite{NS20}. There have since been many works studying these rings, e.g. \cite{Goo92}, \cite{GL94a}, \cite{SZ02}, \cite{LaL04}, \cite{Tae06}, \cite{LaL08}, \cite{BU09}, \cite{MK19}, \cite{AP20b}. The paper \cite{LL88} of Lam and Leroy, in particular, will be instrumental in the present work. 

Our first result extends the fixed points theorem of \cite{CV21} to $D[x,\sigma, \delta]$. We extend the formal composition operation $f \mapsto f^{\circ n}$ to the context of skew polynomials rings, and prove that if $f \in $ $D[x,\sigma, \delta]$ satisfies $f(a) = a$ then $f^{\circ n}(a) = a$ for all $n \geq 0$, see Theorem \ref{thm:fixed}.

Next, we consider general orbits of points in $D$ with respect to polynomials in $D[x,\sigma,\delta]$. Our main result is a sufficient condition for a point $a \in D$ satisfying $f^{\circ r}(a) = a$ to be $r$-periodic, see Theorem \ref{periodic}. This result generalizes \cite[Theorem 4.6]{CV21} from $D[x]$ to $D[x,\sigma,\delta]$.

The paper is organized as follows: In \S\ref{sec:prelim} we briefly review some basic material concerning skew polynomial rings; In \S\ref{sec:fixed} we prove the fixed points theorem for $D[x,\sigma,\delta]$. In the case of $D[x]$ (where $\sigma = \operatorname{Id}$ and $\delta =0$) our proof reduces to a simpler and shorter proof than the one given in \cite[Corollary 4.7]{CV21}. Finally, in \S\ref{sec:orbit} we prove Theorem \ref{periodic} concerning periodic points.

\section{Preliminaries}\label{sec:prelim}

Throughout this paper, let $R = D[x,\s,\dd]$ be a skew polynomial ring, where $\s$ is an endomorphism of the division ring $D$ and $\dd$ is a $\s$-derivation on $D$. That is, $\dd$ is an additive map that satisfies the generalized Leibniz rule $\delta(ab) = \sigma(a) \delta(b) + \delta(a)b$, see \cite[p.~309]{LL88}\footnote{This rule ensures that $\deg(pq) = \deg(p)+\deg(q)$ for all $p,q \in R$, see \cite[p.~401]{Ore33}.}. 

A remark concerning notation: Due to space considerations versus readability considerations, we will sometimes use the notation $a^\s$ and sometimes $\s(a)$ to denote the image of $a \in D$ under $\s$. Similarly, we shall often write $a'$ for $\delta(a)$ when the derivation is clear from the context. Also, if $b$ is a non-zero element in $D$, we will denote by $a^b$ the $(\sigma,\delta)$-conjugation $a^b = b^\sigma a b^{-1}+b'b^{-1}$, see \cite[p.~311]{LL88}.

For a polynomial $f = \sum f_i x^i \in R$ and an element $a \in D$, the substitution $f(a)$ is defined as follows: First, for an integer $n \geq 0$, one defines the generalized power $a^{[n]}$ inductively, by $a^{[0]} = 1$ and $a^{[n+1]} = \s(a^{[n]})a+\dd(a^{[n]})$ for all $n \geq 0$ (see \cite[p.~310]{LL88}, where $a^{[n]}$ is denoted by $N_n(a)$). Then $f(a)$ is defined to be $\sum f_i a^{[i]}$. From this definition it follows that $f(a)$ is the unique element in $D$ for which $\big(f-f(a)\big)$ is a left multiple of $x-a$ in the ring $R$, see \cite[Lemma 2.4]{LL88}.

Next, let us trivially extend $\s$ and $\dd$ from $D$ to $R$ by $\s(\sum f_i x^i) = \sum f_i^\s x^i$ and $\dd(\sum f_i x^i) = \sum \dd(f_i) x^i$. We define the powers $f^{[n]}$ of $f \in R$ as we did for $D$. That is, $f^{[0]} = 1$ and $f^{[n+1]} = \s(f^{[n]})f+\dd(f^{[n]})$ for all $n \geq 0$. The formal composition $f \circ g$ will be defined as $\sum f_i g^{[i]}$. Note that this definition is consistent with the substitution of elements in $D$. That is, if $a 
\in D$, then viewing $a$ as a (constant) polynomial in $R$, we have $f 
\circ a = f(a)$. Note also that the composition generalizes the one studied in \cite{CV21} for the ring $D[x]$, since in this case we have $f^{[n]} = f^n$ for all $f \in D[x]$ and $n \geq 0$. Finally, we define the formal power $f^{\circ n}$ inductively, by $f^{\circ 0} = x$ and $f^{\circ (n+1)} = f \circ f^{\circ n}$ for all $n \geq 0$, as in \cite{CV21}. 

We note that the extended $\sigma$ and $\delta$ need not be an endomorphism and a $\sigma$-derivation of $R$, respectively. However, they are if one assumes that $\sigma$ and $\dd$ commute:

\begin{proposition}\label{extend1} If $\s$ and $\dd$ commute on $D$, then $\sigma$ is an endomorphism of $R$. \end{proposition}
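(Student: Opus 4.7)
The plan is to invoke the universal property of the skew polynomial ring $R = D[x,\sigma,\delta]$: a ring homomorphism $\phi \colon R \to R$ is uniquely determined by a ring homomorphism $\phi_0 \colon D \to R$ together with the image $y = \phi(x) \in R$, subject to the single compatibility requirement $y \phi_0(a) = \phi_0(\sigma(a))\, y + \phi_0(\delta(a))$ for all $a \in D$, which is forced by the defining relation $xa = \sigma(a)x + \delta(a)$ in $R$. For the extended map in the statement we take $\phi_0 = \sigma$ (as a map $D \to D \subseteq R$) and $y = x$, so verifying that the extension is an endomorphism amounts to checking that
\[
x \cdot \sigma(a) = \sigma(\sigma(a))\, x + \sigma(\delta(a)) \qquad \text{for every } a \in D.
\]

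To check this, I would expand the left-hand side directly using the defining relation of $R$, applied to the element $\sigma(a) \in D$: this gives $x \sigma(a) = \sigma(\sigma(a))\, x + \delta(\sigma(a)) = \sigma^2(a)\, x + \delta(\sigma(a))$. Comparing with the required right-hand side $\sigma^2(a) x + \sigma(\delta(a))$, the leading terms agree, and equality reduces to the identity $\delta(\sigma(a)) = \sigma(\delta(a))$, which is precisely the hypothesis that $\sigma$ and $\delta$ commute on $D$. Thus the candidate map is compatible with the relation and therefore extends (uniquely) to a ring endomorphism of $R$; since its values on the generators $D$ and $x$ coincide with the formula $\sum f_i x^i \mapsto \sum \sigma(f_i) x^i$ from the excerpt, this extension is the map under consideration.

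If one prefers not to invoke the universal property, the same computation can be framed as a direct verification of multiplicativity: additivity is immediate, so by the distributive law it suffices to check $\sigma(fg) = \sigma(f)\sigma(g)$ for monomials $f = a x^m$ and $g = b x^n$, and an induction on $m$ reduces everything to the case $\sigma(xb) = x\sigma(b)$, which is exactly the one-line calculation above. The main (very mild) obstacle is bookkeeping: one must consistently distinguish the original $\sigma,\delta$ on $D$ from their trivial extensions to $R$, and remember that when $\sigma$ acts on the right-hand side of $xb = \sigma(b)x + \delta(b)$ it acts coefficient-wise, so the single commutation hypothesis on $D$ suffices to make everything match.
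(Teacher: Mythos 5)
Your proposal is correct, and your primary route is genuinely different from the paper's. The paper proves multiplicativity by hand: it reduces to showing $\s(x^k a x^m) = x^k a^\s x^m$ and runs an induction on $k$, pushing $x$ past $a$ via the defining relation $xa = a^\s x + \delta(a)$ at each step; the hypothesis $\s\dd = \dd\s$ enters exactly once, to identify $\delta(a)^\s$ with $\delta(a^\s)$. You instead invoke the universal property of the Ore extension: a ring homomorphism out of $R$ is determined by a ring homomorphism $\phi_0$ on $D$ and a choice of $\phi(x) = y$ satisfying $y\,\phi_0(a) = \phi_0(\s(a))\,y + \phi_0(\delta(a))$, and with $\phi_0 = \s$, $y = x$ this compatibility condition collapses (after one application of the defining relation) to $\delta(\s(a)) = \s(\delta(a))$, i.e.\ precisely the hypothesis. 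This is cleaner and makes transparent \emph{why} the commutation hypothesis is exactly what is needed, and it gives uniqueness for free; the cost is that it leans on the universal property of $D[x,\s,\dd]$, a standard fact but one the paper neither states nor cites, so a self-contained write-up would need to include (or reference) it. Your fallback direct verification, reducing multiplicativity on monomials by induction to the single identity $\s(xb) = x\s(b)$, is essentially the paper's argument. One small bookkeeping point worth making explicit in the induction: to pass from $x^m$ to $x^{m+1}$ you apply the inductive hypothesis to the two elements $\s(b)$ and $\delta(b)$ produced by the relation, so the hypothesis must be quantified over all of $D$ at each stage — which is how the paper phrases it.
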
 \begin{proof} Clearly $\sigma$ is additive, and for any $a \in D$ and $f \in R$ we have $\s(af) = \s(a)\s(f)$. Thus it remains to prove that $\s(x^k a x^m) = x^k a^\sigma x^m$ for all $a\in D$ and $k,m \geq 0$. For $k = 0$ there is nothing to prove. Suppose by induction that we have proven the claim for a given $k$ (for all $a$ and all $m$), then$$\s(x^{k+1} a x^m) = \s(x^k (a^\sigma x + \delta(a)) x^m) = \s(x^k (a^\sigma x) x^m+x^k (\delta(a)) x^m) = $$
$$\s(x^k (a^\sigma ) x^{m+1})+\s(x^k (\delta(a)) x^m) = x^k a^{\s^2}x^{m+1}+x^k \delta(a)^\s x^m = $$

$$x^k (a^{\s^2}x+\delta(a)^\s)  x^m = x^k (a^{\s^2}x+\delta(a^\s))  x^m = x^k (xa^\s)  x^m =x^{k+1}a^\s  x^m, $$ as needed. \end{proof}

\begin{proposition}\label{extend2} If $\dd$ and $\s$ commute on $D$, then $\delta$ is a $\s$-derivation on $R$. \end{proposition}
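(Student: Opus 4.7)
The plan is to verify the two defining properties of a $\sigma$-derivation on $R$: additivity, which is immediate from the definition $\delta(\sum f_i x^i) = \sum \delta(f_i) x^i$, and the Leibniz rule $\delta(pq) = \sigma(p)\delta(q) + \delta(p)q$ for all $p,q \in R$. Since both sides of this identity are additive in $p$, it suffices to establish it when $p = a x^k$ for some $a \in D$ and $k \geq 0$, with $q \in R$ arbitrary.

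I would assemble the monomial case from three pieces. The first is that for $c \in D$ and arbitrary $q \in R$, the identity $\delta(cq) = \sigma(c)\delta(q) + \delta(c)q$ is obtained by applying the Leibniz rule on $D$ coefficient-wise. The second is Proposition~\ref{extend1}, which already yields $\sigma(ax^k) = \sigma(a)x^k$. The third and crucial piece is the identity $\delta(xq) = x\,\delta(q)$ for every $q \in R$; since $\sigma(x) = x$ and $\delta(x) = 0$, this is precisely the Leibniz rule for the pair $(x,q)$.

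For the crucial piece, additivity reduces the claim to $q = bx^m$, in which case $xq = \sigma(b) x^{m+1} + \delta(b) x^m$ and the verification boils down to the single equality $\delta\sigma(b) = \sigma\delta(b)$ in $D$, which is exactly the hypothesis of the proposition. Iterating then gives $\delta(x^k q) = x^k\,\delta(q)$ for every $k \geq 0$, and combining the three pieces produces, for $p = ax^k$,
\[
\delta(pq) = \sigma(a)\,\delta(x^k q) + \delta(a)\, x^k q = \sigma(a) x^k \delta(q) + \delta(a) x^k q = \sigma(p)\delta(q) + \delta(p) q,
\]
which then extends to all $p \in R$ by additivity.

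The only place the commutation hypothesis is genuinely needed is in the step $\delta(xq) = x\,\delta(q)$; every other reduction is formal. I therefore expect this to be the main (and rather mild) obstacle, playing a role analogous to that of the identity $\delta(a)^\sigma = \delta(a^\sigma)$ in the proof of Proposition~\ref{extend1}.
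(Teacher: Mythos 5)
Your proof is correct, and it reorganizes the paper's computation in a slightly cleaner way. The paper proves the Leibniz rule by reducing to $f = x^k$, $g = ax^m$ and running a single induction on $k$ that carries the full identity $\delta(x^{k}ax^m) = (x^{k})^\sigma\delta(ax^m)+\delta(x^{k})ax^m$ through each step; you instead isolate the one genuinely nontrivial fact, $\delta(xq) = x\,\delta(q)$ (equivalently, the Leibniz rule for the pair $(x,q)$, reduced to the single identity $\delta\sigma = \sigma\delta$ on $D$), and then assemble the general case from the formal pieces $\delta(cq) = \sigma(c)\delta(q)+\delta(c)q$ for $c\in D$ and $\sigma(ax^k)=\sigma(a)x^k$. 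The assembly works because the property ``$\delta(pq)=\sigma(p)\delta(q)+\delta(p)q$ for all $q$'' is closed under products in $p$ once $\sigma$ is multiplicative, and your explicit computation for $p=ax^k$ realizes exactly this. What your route buys is transparency about where the commutation hypothesis enters (only in $\delta(xq)=x\delta(q)$); what the paper's route buys is self-containment of a single induction. One small quibble: you invoke Proposition~\ref{extend1} for $\sigma(ax^k)=\sigma(a)x^k$, but that is just the definition of the extended $\sigma$ (coefficientwise action) and needs no hypothesis; Proposition~\ref{extend1} is about $\sigma$ being multiplicative on $R$, which you do implicitly rely on in the combination step, so the citation is defensible but slightly misplaced.
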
 \begin{proof} Clearly $\dd$ is additive. We must show that for all $f,g \in R$ we have $\dd(fg) = f^\sigma \dd(g)+ \dd(f)g$. If $f$ is in $D$ then this is obvious, and so by additivity we may assume that $f = x^k, g = a x^m$ for some $a \in D$ and $k,m \geq 0$. For $k = 0$ the claim is clear. Assume that we have proven the claim for all $m$ and $a$ and for a fixed $k$. Then for $k+1$ we have $$\dd(x^{k+1}ax^m) =\dd(x^k (a^\sigma x + \delta(a)) x^m) = \dd(x^k (a^\sigma x) x^m+x^k (\delta(a)) x^m) = $$$$\dd(x^k (a^\sigma  x^{m+1}))+\dd(x^k (\delta(a) x^m)) =$$
$$ x^k\dd(a^\sigma)x^{m+1}+x^k a^\sigma x^{m+1}+x^k\dd^2(a)x^m + x^k\dd(a)x^m= $$
$$ x^k(\dd(a^\sigma)x +\dd^2(a))x^m +x^k(a^\sigma x+\dd(a)) x^m = $$
$$ x^k(x\dd(a))x^m +x^k(xa) x^m = x^{k+1}\dd(a)x^m+x^{k+1}ax^m =$$

$$  (x^{k+1})^\sigma \dd(ax^m)+\dd(x^{k+1}) (ax^m), $$ as needed. \end{proof}

Another property that holds if one assumes that $\delta$ and $\sigma$ commute is the following:

\begin{proposition}\label{power_commute} If $\dd$ and $\s$ commute on $D$, then for all $a \in D$ and $n \geq 1$ we have $\s(a^{[n]}) = (\s(a))^{[n]}$. \end{proposition}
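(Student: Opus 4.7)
The plan is to prove this by straightforward induction on $n$, using the recursive definition $a^{[n+1]} = \sigma(a^{[n]})a + \delta(a^{[n]})$ together with the fact that $\sigma$ is multiplicative, $\sigma$ and $\delta$ are both additive, and crucially the hypothesis $\sigma \circ \delta = \delta \circ \sigma$ on $D$.

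For the base case $n=1$, I would just compute directly: $a^{[1]} = \sigma(1)a + \delta(1) = a$, so $\sigma(a^{[1]}) = \sigma(a) = (\sigma(a))^{[1]}$. For the inductive step, assuming $\sigma(a^{[n]}) = (\sigma(a))^{[n]}$, I would apply $\sigma$ to the recursion:
\[
\sigma(a^{[n+1]}) = \sigma\!\bigl(\sigma(a^{[n]})a\bigr) + \sigma\!\bigl(\delta(a^{[n]})\bigr) = \sigma\!\bigl(\sigma(a^{[n]})\bigr)\sigma(a) + \delta\!\bigl(\sigma(a^{[n]})\bigr),
\]
where the second equality uses that $\sigma$ is an endomorphism of $D$ (for the first term) and that $\sigma \circ \delta = \delta \circ \sigma$ (for the second term). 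Then substituting the inductive hypothesis $\sigma(a^{[n]}) = (\sigma(a))^{[n]}$ into both terms yields
\[
\sigma(a^{[n+1]}) = \sigma\!\bigl((\sigma(a))^{[n]}\bigr)\sigma(a) + \delta\!\bigl((\sigma(a))^{[n]}\bigr) = (\sigma(a))^{[n+1]},
\]
the last equality being the recursion applied with $a$ replaced by $\sigma(a)$.

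There is no real obstacle here; the only point requiring any care is making sure the commutativity hypothesis is invoked at precisely the right place, namely to move $\sigma$ past $\delta$ on the $\delta(a^{[n]})$ term, since without that we would have no way to turn an outer $\sigma$ applied to a $\delta$-term into a $\delta$ applied to a $\sigma$-term, and the induction would not close.
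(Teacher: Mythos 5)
Your proof is correct and is essentially the same as the paper's: both are inductions on $n$ that apply $\sigma$ to the recursion $a^{[n+1]}=\sigma(a^{[n]})a+\delta(a^{[n]})$, use multiplicativity of $\sigma$ on the first term and the hypothesis $\sigma\circ\delta=\delta\circ\sigma$ on the second, and then invoke the induction hypothesis. The only (immaterial) difference is that the paper starts from $(\sigma(a))^{[n+1]}$ and unwinds toward $\sigma(a^{[n+1]})$, whereas you go in the opposite direction.
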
 \begin{proof} For $n = 1$ there is nothing to prove. Suppose we have proven the claim for $n \geq 1$. Then for $n+1$, using the induction hypothesis and the fact that $\delta$ and $\s$ commute, we have:

$$ (a^\s)^{[n+1]} = ((a^\s)^{[n]})^\sigma \cdot a^\sigma +\delta((a^\s)^{[n]}) = \s((a^{[n]})^\s a)+\dd((a^{[n]})^\s) = $$
$$\s((a^{[n]})^\s\cdot a)+ \s(\dd(a^{[n]})) = \s\big((a^{[n]})^\s\cdot a + \dd(a^{[n]})\big) = \s(a^{[n+1]}),$$ as needed.\end{proof}

Propositions \ref{extend1}, \ref{extend2} and \ref{power_commute} will not be needed in the sequel, yet they may be of independent interest. Note that in the unmixed cases where $R = D[x,\sigma]$ or $R = D[x,\delta]$ (that is, when $\delta = 0$ or $\sigma = \operatorname{Id}$, respectively) $\sigma$ and $\delta$ trivially commute, and so in all such cases the assertions of the above propositions hold.

\section{Fixed points}\label{sec:fixed}

In this section we generalize \cite[Corollary 4.7]{CV21}. 

\begin{lemma}\label{good} Suppose that $a \in D$ and $f \in R$ satisfy $f(a) = a$. Then $f^{[n]}(a) = a^{[n]}$ for all $n \geq 1$. \end{lemma}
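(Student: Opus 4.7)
The plan is to prove, by induction on $n$, the stronger statement that $f^{[n]} - a^{[n]}$ lies in the left ideal $R \cdot (x - a)$ of $R$; by the characterization of evaluation recalled in Section~\ref{sec:prelim}, this will imply $f^{[n]}(a) = a^{[n]}$. The base case $n = 1$ is immediate: $f^{[1]} = f$, $a^{[1]} = a$, and the hypothesis $f(a) = a$ gives $f - a = q(x - a)$ for some $q \in R$.

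For the inductive step, the crucial auxiliary observation is that the defining commutation relation of $R$ extends verbatim from $D$ to all of $R$:
\[
x \cdot k \;=\; \sigma(k)\, x + \delta(k) \qquad \text{for every } k \in R.
\]
This is the defining relation when $k \in D$; for $k = c x^i$ with $c \in D$ it follows by one application of $x c = \sigma(c) x + \delta(c)$, and the general case follows by additivity. Rearranging, for any $a \in D$ one obtains the key identity
\[
\sigma(k)\, a + \delta(k) \;=\; x \cdot k \;-\; \sigma(k)(x - a).
\]
Assuming inductively that $f^{[n]} = g_n(x - a) + a^{[n]}$, and writing $f = q(x - a) + a$, I would substitute into the recursion $f^{[n+1]} = \sigma(f^{[n]}) f + \delta(f^{[n]})$ and apply the identity above with $k = f^{[n]}$, together with $x a^{[n]} = \sigma(a^{[n]})(x - a) + a^{[n+1]}$ to peel off the constant term. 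A short manipulation then yields
\[
f^{[n+1]} - a^{[n+1]} \;=\; \bigl[\sigma(f^{[n]})(q - 1) + x g_n + \sigma(a^{[n]})\bigr](x - a),
\]
which lies in $R \cdot (x - a)$, closing the induction.

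The main obstacle is spotting the right auxiliary identity. Although the extensions of $\sigma$ and $\delta$ to $R$ are not multiplicative in general (as noted just before Proposition~\ref{extend1}), the single commutation relation $x k = \sigma(k) x + \delta(k)$ does continue to hold globally on $R$, and this is precisely what is needed to convert $\sigma(f^{[n]}) a + \delta(f^{[n]})$ into an expression that exposes a factor of $(x - a)$; after that, everything reduces to bookkeeping.
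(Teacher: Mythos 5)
Your proof is correct, and it takes a genuinely different route from the paper. The paper evaluates $f^{[n+1]} = \sigma(f^{[n]})f + \delta(f^{[n]})$ at $a$ directly, which forces it to invoke the Lam--Leroy product formula $\big(\sigma(f^{[n]})f\big)(a) = \big(\sigma(f^{[n]})(a^a)\big)\cdot a$, the $(\sigma,\delta)$-conjugation identity $(a^a)^{[i]}a = a^{[i+1]}$ from \cite[Proposition 2.9(3)]{LL88}, and an application of the Leibniz rule to $\delta\big(f^{[n]}(a)\big)$; it also has to treat the case $a = 0$ separately, since the product formula requires the right factor to have nonzero value at $a$. You instead strengthen the induction hypothesis to the ideal-membership statement $f^{[n]} - a^{[n]} \in R(x-a)$ and work entirely inside the ring, and your key observation --- that the commutation relation $xk = \sigma(k)x + \delta(k)$ extends verbatim to all $k \in R$ under the coefficientwise extensions of $\sigma$ and $\delta$ (immediate on monomials $cx^i$, then by additivity) --- is exactly what is needed to expose the factor $x - a$; I checked the resulting identity $f^{[n+1]} - a^{[n+1]} = \big[\sigma(f^{[n]})(q-1) + xg_n + \sigma(a^{[n]})\big](x-a)$ and it is right. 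What your approach buys is self-containedness (no product formula, no conjugation $a^b$) and uniformity (no case split at $a = 0$, since nothing is ever inverted); what the paper's approach buys is that the same toolkit (product formula plus conjugation) is reused in Proposition \ref{phew} and Corollary \ref{cor1}, so the lemma's proof fits the style of the rest of Section \ref{sec:orbit}.
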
 \begin{proof} For $n = 1$ the claim holds trivially. Suppose by induction that we have proven the claim for $n$. Then for $n+1$, we have $f^{[n+1]} = \s(f^{[n]})f+\dd(f^{[n]})$. Write $f^{[n]} = \sum g_i x^i$. 

Let us first compute $\big(\s(f^{[n]})f\big)(a)$. If $f(a) = a \neq 0$, then by the product formula \cite[Theorem 2.7]{LL88}, we have $$\big(\s(f^{[n]})f\big)(a) = \big(\s(f^{[n]})(a^a)\big)\cdot a = \sum g_i^\s(a^a)^{[i]}a.$$  
By \cite[Proposition 2.9(3)]{LL88}, we can write the last expression as $\sum g_i^\s a^{[i+1]}$, which equals $$\sum g_i^\s (\s(a^{[i]}))a+\dd(a^{[i]}) = (\sum g_i^\s \s(a^{[i]}))a+\sum g_i^\s\dd(a^{[i]}) = $$ $$(f^{[n]}(a))^\s a + \sum g_i^\s\dd(a^{[i]}) = (a^{[n]})^\s a + \sum g_i^\s\dd(a^{[i]}).$$ 
Thus $$f^{[n+1]}(a) = \big(\s(f^{[n]})f\big)(a)+\dd(f^{[n]})(a) = (a^{[n]})^\s a + \sum g_i^\s\dd(a^{[i]})+\dd(f^{[n]})(a).$$ Note that this equality also holds in the case where $a = 0$, since in this case we have by induction that $a^{[k]} = 0$ for all $k \geq 0$, and in particular $g_0 = f^{[n]}(0) = 0$ by the induction hypothesis. 

Next, by applying the $\sigma$-derivation $\delta$ to the equality $a^{[n]} = f^{[n]}(a) =\sum g_i a^{[i]}$ we get, using the Leibniz rule, that $$\dd(a^{[n]}) = \sum g_i^\s\delta(a^{[i]}) + \sum \delta(g_i)a^{[i]} = \sum g_i^\s\delta(a^{[i]}) + \delta(f^{[n]})(a).$$ Plugging this equality into the expression for $f^{[n+1]}(a)$ established above, we get that $f^{[n+1]}(a) = (a^{[n]})^\s a + \dd(a^{[n]}) = a^{[n+1]}$. \end{proof}

We shall say that a point $a \in D$ is a {\bf fixed point} of $f \in D[x,\sigma,\dd]$, if $f^{\circ n}(a) = a$ for all $n \geq 1$.

\begin{theorem}\label{thm:fixed} Let $f \in D[x,\sigma,\dd]$ and $a \in D$. If $f(a) = a$ then $a$ is a fixed point of $f$. \end{theorem}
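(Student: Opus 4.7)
The plan is to proceed by induction on $n$, with the real work already done in Lemma \ref{good}. For the base case $n = 1$, I would first observe that the trivial extensions of $\s$ and $\dd$ to $R$ satisfy $\s(x) = x$ and $\dd(x) = 0$, so a short induction on $i$ gives $x^{[i]} = x^i$. Consequently $f^{\circ 1} = f \circ x = \sum f_i x^i = f$, and $f^{\circ 1}(a) = f(a) = a$ by hypothesis.

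For the inductive step, assume $f^{\circ n}(a) = a$ and set $g := f^{\circ n} \in R$. Unwinding the definition of composition, $f^{\circ(n+1)} = f \circ g = \sum f_i\, g^{[i]}$. Evaluating at $a$ and using that the substitution map $p \mapsto p(a)$ is additive and left $D$-linear (immediate from its definition $p(a) = \sum p_j a^{[j]}$), I obtain $f^{\circ(n+1)}(a) = \sum f_i \cdot g^{[i]}(a)$. The crucial step is then to apply Lemma \ref{good} with $g$ in place of $f$: this is legitimate because $g \in R$ satisfies $g(a) = a$ by the inductive hypothesis, and the lemma places no restriction on the polynomial beyond the fixed-point condition. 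It delivers $g^{[i]}(a) = a^{[i]}$ for every $i \geq 1$, with the $i = 0$ case being the tautology $1 = 1 = a^{[0]}$. Substituting back yields $f^{\circ(n+1)}(a) = \sum f_i a^{[i]} = f(a) = a$, closing the induction.

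There is no genuine obstacle once Lemma \ref{good} is in hand; the only conceptually sensitive point is recognizing that the lemma is applied not to $f$ itself but to the iterates $f^{\circ n}$, each of which inherits the fixed-point property at $a$ from the previous stage. This is also the key structural observation that makes the argument shorter than the one in \cite[Corollary 4.7]{CV21}: the recursive definition of $f^{\circ n}$ composes transparently with the substitution map, and Lemma \ref{good} bridges the generalized powers $g^{[i]}$ and $a^{[i]}$ exactly where the composition formula needs them.
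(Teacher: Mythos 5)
Your proof is correct and follows essentially the same route as the paper: induct on $n$, apply Lemma \ref{good} to $g = f^{\circ n}$ (which satisfies $g(a)=a$ by the inductive hypothesis), and use left $D$-linearity of substitution to conclude $f^{\circ(n+1)}(a) = \sum f_i a^{[i]} = a$. Your explicit treatment of the base case ($x^{[i]} = x^i$, hence $f^{\circ 1} = f$) and of the trivial $i=0$ instance of the lemma are small touches the paper leaves implicit, but the argument is the same.
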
 \begin{proof} For $n = 1$ there is nothing to prove. Suppose by induction that $f^{\circ n}(a) = a$ for a given $n \geq 1$. Write $g = f^{
\circ n}$. By Lemma \ref{good} we have $g^{[i]}(a) = a^{[i]}$ for all $i \geq 0$. Write $f = \sum f_i x^i$. Then $f^{\circ (n+1)}(a) = (\sum f_i g^{[i]})(a) =  \sum f_i (g^{[i]}(a)) = \sum g_i a^{[i]} = g(a) = a$. \end{proof}

\section{Orbits and periodic points}\label{sec:orbit}


\begin{definition} We say that an element $a \in D$ is {\bf stable} with respect to $(\sigma,\delta)$ if $\sigma\big((a^{[n]})'\big) = \big(\sigma(a^{[n]})\big)'$ for all $n \in \N$. \end{definition}

When $\sigma$ and $\delta$ are clear from the context, we will simply say that an element $a$ is {\bf stable}, rather than stable with respect to $(\sigma,\delta)$.

Note that for the condition $\sigma\big((a^{[n]})'\big) = \big(\sigma(a^{[n]})\big)'$ to hold for all $n \in \N$ it is not generally enough to assume that $\sigma(a)' = \sigma(a')$. However, in the unmixed cases of $D[x,\sigma]$ (that is, $\delta = 0$) or $D[x,\delta]$ (that is, $\sigma = \operatorname{Id}$), every element is stable. More generally, if $\sigma$ and $\delta$ commute, then every element in $D$ is stable. This holds in natural mixed cases, for example when $D$ is a rational function field $K(t)$ over a (commutative) field $K$, $\sigma$ is the $K$-automorphism $t \mapsto t+1$, and $\delta$ is the standard derivation with respect to $t$. 



\begin{definition} Let $a,b \in D$. We say that $a$ {\bf interchanges} with $b$ if either $a = b$, or   $a^\sigma b+a' = b^\sigma a+b'$ and $a$ is stable. \end{definition}

Note that interchangeability is by definition a reflexive condition, but not generally symmetric.
However, if every element in $D$ is stable, as in the common cases mentioned above, interchangeability is clearly a symmetric relation. 

Note also that in the case of $D[x]$ (that is, when $\delta = 0$ and $\sigma = \operatorname{Id}$), $a$ interchanges with $b$ if and only if $a$ and $b$ commute.

\begin{lemma}\label{zero_case} If $a' = 0$ and $a$ is stable then $\big(a^{[n]}\big)' =  0$ for all $n \geq 0$. \end{lemma}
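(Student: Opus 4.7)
The plan is to prove the claim by induction on $n$. The base case $n = 0$ holds because $a^{[0]} = 1$ and $\delta(1) = 0$ (a quick consequence of the Leibniz rule applied to $1 = 1\cdot 1$). The case $n = 1$ then follows at once since $a^{[1]} = a$ and $a' = 0$ by assumption.

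For the inductive step, I would assume $(a^{[n]})' = 0$ and compute $(a^{[n+1]})'$ directly from the defining recursion $a^{[n+1]} = \sigma(a^{[n]})\cdot a + \delta(a^{[n]})$. Applying $\delta$ and using the generalized Leibniz rule on the product $\sigma(a^{[n]})\cdot a$ gives
\[
\bigl(a^{[n+1]}\bigr)' \;=\; \sigma\bigl(\sigma(a^{[n]})\bigr)\,a' \;+\; \bigl(\sigma(a^{[n]})\bigr)'\,a \;+\; \delta^{2}(a^{[n]}).
\]
The first summand vanishes because $a' = 0$ by hypothesis, and the last summand $\delta^{2}(a^{[n]}) = \delta\bigl(\delta(a^{[n]})\bigr) = \delta(0) = 0$ by the inductive hypothesis. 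For the middle summand, the stability of $a$ yields $\bigl(\sigma(a^{[n]})\bigr)' = \sigma\bigl((a^{[n]})'\bigr) = \sigma(0) = 0$, again using the inductive hypothesis. Hence $(a^{[n+1]})' = 0$, closing the induction.

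The only non-routine point is recognizing that the definition of stability is exactly tailored so that one may commute $\sigma$ past $\delta$ on the powers $a^{[n]}$; without this, the middle term $\bigl(\sigma(a^{[n]})\bigr)'a$ could not be eliminated even knowing $(a^{[n]})' = 0$. The remaining manipulations are a direct expansion using the Leibniz rule, so I do not anticipate a serious obstacle beyond keeping the bookkeeping of $\sigma$'s and $\delta$'s straight.
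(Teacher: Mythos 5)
Your proof is correct and follows essentially the same route as the paper: induction on $n$, expanding $(a^{[n+1]})'$ via the generalized Leibniz rule into $\sigma^2(a^{[n]})a' + (\sigma(a^{[n]}))'a + \delta^2(a^{[n]})$ and killing each term using $a'=0$, stability, and the inductive hypothesis. No gaps.
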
 \begin{proof} For $n = 0$ we have $(a^{[0]})' = 1' = 0$. Suppose we have proven the claim for a given $n$. Then for $n+1$ we have, by the Leibniz rule: $$\big(a^{[n+1]}\big)' = \big(\sigma(a^{[n]})a+(a^{[n]})'\big)' = \sigma^2(a^{[n]})a'+\big(\sigma(a^{[n]})'\big)a+(a^{[n]})''.$$ By the induction hypothesis, and since $a$ is stable, this expression equals 
$$\sigma^2(a^{[n]})0+\sigma\big((a^{[n]})'\big)a+0' = 0+\sigma(0)a+0' = 0.$$\end{proof}

The key proposition is the following one:

\begin{proposition}\label{phew} If $a$ interchanges with $b$ and $b \neq 0$ then $\big(a^b\big)^{[n]}b = \sigma\big(a^{[n]}\big)b +(a^{[n]})'$ for all $n \geq 0$. \end{proposition}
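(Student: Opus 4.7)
The plan is to proceed by induction on $n$. For $n = 0$, both sides reduce to $b$: indeed $(a^b)^{[0]} b = 1 \cdot b = b$ and $\sigma(a^{[0]}) b + (a^{[0]})' = \sigma(1)b + 1' = b$.

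For the inductive step, assume $(a^b)^{[n]} b = \sigma(a^{[n]}) b + (a^{[n]})'$. Expanding via the recurrence $(a^b)^{[n+1]} = \sigma\bigl((a^b)^{[n]}\bigr)\, a^b + \bigl((a^b)^{[n]}\bigr)'$ and multiplying on the right by $b$, one obtains
$$(a^b)^{[n+1]} b = \sigma\bigl((a^b)^{[n]}\bigr)\bigl(b^\sigma a + b'\bigr) + \bigl((a^b)^{[n]}\bigr)' b,$$
using the identity $a^b \cdot b = b^\sigma a + b'$ that follows at once from the definition of $a^b$. The induction hypothesis solves for $(a^b)^{[n]} = \sigma(a^{[n]}) + (a^{[n]})' b^{-1}$, so $\sigma$ and $\delta$ can be applied to this expression to rewrite $\sigma\bigl((a^b)^{[n]}\bigr)$ and $\bigl((a^b)^{[n]}\bigr)'$ explicitly in terms of $a^{[n]}$, its derivative, $b$, $b'$, and $(b^{-1})'$.

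Two auxiliary computations feed into this expansion: stability of $a$, which gives $\bigl(\sigma(a^{[n]})\bigr)' = \sigma\bigl((a^{[n]})'\bigr)$ and is needed when applying $\delta$ to $\sigma(a^{[n]})$; and the formula $(b^{-1})' = -(b^\sigma)^{-1} b' b^{-1}$, obtained by applying the Leibniz rule to $b b^{-1} = 1$. In parallel, one expands the target $\sigma(a^{[n+1]}) b + (a^{[n+1]})'$ using $a^{[n+1]} = \sigma(a^{[n]}) a + (a^{[n]})'$ (invoking stability again to handle $\bigl(\sigma(a^{[n]}) a\bigr)'$). A side-by-side comparison should show that all terms match except for a single residual of the form $\sigma^2(a^{[n]}) \cdot \bigl((b^\sigma a + b') - (a^\sigma b + a')\bigr)$, which vanishes by the interchangeability hypothesis, trivially if $a = b$ and by the defining equation otherwise.

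The main obstacle is not conceptual but organizational: the expansion of $\bigl((a^{[n]})' b^{-1}\bigr)'$ produces a term involving $(b^{-1})'$, and after multiplying through by $b$ one must track the propagation of the resulting $(b^\sigma)^{-1} b'$ factors and verify that they cancel against the analogous contributions from $\sigma\bigl((a^b)^{[n]}\bigr) \cdot b'$. Once this bookkeeping is carried out and the secondary terms are seen to cancel, what remains is exactly the interchangeability residual, and the identity drops out.
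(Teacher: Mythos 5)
Your overall strategy is the paper's: induct on $n$, solve the induction hypothesis for $(a^b)^{[n]} = \sigma(a^{[n]}) + (a^{[n]})'b^{-1}$, expand $(a^b)^{[n+1]}b$ via the recurrence and the identity $a^b b = b^\sigma a + b'$, cancel the $b'$ and $(b^{-1})'$ contributions by the Leibniz rule applied to $(b^{-1}b)'=0$, and compare with the expansion of $\sigma(a^{[n+1]})b + (a^{[n+1]})'$. There is, however, a genuine gap in how you use the hypothesis. In this paper, ``$a$ interchanges with $b$'' means: $a = b$, \emph{or} ($a^\sigma b + a' = b^\sigma a + b'$ \emph{and} $a$ is stable). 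In particular, when $a = b$ you are not entitled to stability of $a$, yet you invoke $\big(\sigma(a^{[n]})\big)' = \sigma\big((a^{[n]})'\big)$ unconditionally in the middle of the computation --- both when applying $\delta$ to $\sigma(a^{[n]}) + (a^{[n]})'b^{-1}$ and when expanding $(a^{[n+1]})'$. Your end-of-proof case split (``trivially if $a=b$'') rescues only the residual you kept, not the stability identity you already consumed. The case $a=b$ is not vacuous downstream: Corollary \ref{cor1} applies the proposition with $b = f(a)$, which may well equal $a$.

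The correct bookkeeping leaves \emph{two} residuals, not one: if you never identify $\big(\sigma(a^{[n]})\big)'$ with $\sigma\big((a^{[n]})'\big)$, the difference between the two sides of the claimed identity comes out as
$$\sigma^2(a^{[n]})\Big((b^\sigma a + b') - (a^\sigma b + a')\Big) + \Big(\big(\sigma(a^{[n]})\big)' - \sigma\big((a^{[n]})'\big)\Big)(b-a),$$
and each summand vanishes under interchangeability: if $a=b$ then both $(b-a)$ and $(b^\sigma a + b')-(a^\sigma b + a')$ are zero, while otherwise the defining equation kills the first summand and stability kills the second. This is precisely how the paper's proof proceeds --- it isolates the factor $(b-a)$ \emph{before} appealing to stability. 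Your argument is repaired either by adopting this bookkeeping or by disposing of the case $a=b$ separately at the outset (for $a=b$ the statement reduces to $(a^a)^{[n]}a = a^{[n+1]}$, which is Proposition 2.9(3) of Lam--Leroy and needs no hypotheses beyond $a\neq 0$).
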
 \begin{proof} For $n = 0$ both sides of the claimed equality are $b$. 

Suppose we have proven the claim for a given $n$. Then for $n+1$ we have, using the induction hypothesis:
$$\big(a^{b}\big)^{[n+1]}b = \big( \sigma((a^b)^{[n]})a^b+((a^b)^{[n]})'\big)b=$$
$$ \sigma\big( \sigma(a^{[n]})+(a^{[n]})'b^{-1}\big)a^bb+\big(\sigma(a^{[n]})+(a^{[n]})'b^{-1})\big)'b = $$
$$\sigma^2\big(a^{[n]}\big)a^bb+\sigma\big((a^{[n]})'b^{-1}\big)a^bb+\big(\sigma(a^{[n]})\big)'b+\big((a^{[n]})'b^{-1}\big)'b.$$ In this expression, replace the first occurrence of $a^bb$ with $a^\sigma b+a'$, the second occurrence with $b^\sigma a+b'$, and apply the Leibniz rule to the last summand, to obtain the expression:

$$\sigma^2\big(a^{[n]}\big)(a^\sigma b+a')+\sigma\big((a^{[n]})'b^{-1}\big)(b^\sigma a+b')+\big(\sigma(a^{[n]})\big)'b+ \sigma\big((a^{[n]})'\big)(b^{-1})'b+  (a^{[n]})'',$$ which after expanding and rearranging terms becomes:

$$\sigma^2(a^{[n]})a^\sigma b+\big(\sigma(a^{[n]})\big)'b+\sigma^2(a^{[n]})a'+\sigma\big((a^{[n]})'\big)a+\sigma\big((a^{[n]})'b^{-1}\big)b'+ \sigma\big((a^{[n]})'\big)(b^{-1})'b+  (a^{[n]})'' $$$$ = \sigma^2(a^{[n]})a^\sigma b+\big(\sigma(a^{[n]})\big)'b+\sigma^2(a^{[n]})a'+\sigma\big((a^{[n]})'\big)a+(a^{[n]})''+\sigma\big((a^{[n]})'\big)\big(\sigma(b^{-1})b'+(b^{-1})'b\big).$$ Note that by the Leibniz rule, $\big(\sigma(b^{-1})b'+(b^{-1})'b\big) = (b^{-1}b)' = 1' = 0$, and so we may omit the last summand from the presented expression to get
$$\sigma^2(a^{[n]})a^\sigma b+\big(\sigma(a^{[n]})\big)'b+\sigma^2(a^{[n]})a'+\sigma\big((a^{[n]})'\big)a+(a^{[n]})''.$$ Now, let us add and subtract $\sigma\big((a^{[n]})'\big)b$ from this expression to get:
$$\big(\sigma^2(a^{[n]})a^\sigma+\sigma\big((a^{[n]})'\big)\big) b-\sigma\big((a^{[n]})'\big)b+\big(\sigma(a^{[n]})\big)'b+\sigma^2(a^{[n]})a'+\sigma\big((a^{[n]})'\big)a+(a^{[n]})'' = $$$$\sigma(a^{[n+1]}) b+\big(\big(\sigma(a^{[n]})\big)'-\sigma\big((a^{[n]})'\big)\big)b+\sigma\big((a^{[n]})'\big)a+\sigma^2(a^{[n]})a'+(a^{[n]})''. $$
Next, we add and subtract $\big(\sigma(a^{[n]}))'a$ to this expression to get:
$$\sigma(a^{[n+1]}) b+\big(\big(\sigma(a^{[n]})\big)'-\sigma\big((a^{[n]})'\big)\big)b+\big(\sigma\big((a^{[n]})'\big)-\big(\sigma(a^{[n]}))'\big)a+\big(\sigma(a^{[n]}))'a+\sigma^2(a^{[n]})a'+(a^{[n]})'' $$
$$=\sigma(a^{[n+1]}) b+\big(\big(\sigma(a^{[n]})\big)'-\sigma\big((a^{[n]})'\big)\big)(b-a)+\big(\sigma(a^{[n]}))'a+\sigma^2(a^{[n]})a'+(a^{[n]})'' .$$ Our assumption that $a$ interchanges with $b$ implies that either $b - a = 0$ or $\big(\sigma(a^{[n]})\big)'-\sigma\big((a^{[n]})'\big) = 0$, hence the presented expression reduces to

$$\sigma(a^{[n+1]}) b+\sigma^2(a^{[n]})a'+\big(\sigma(a^{[n]}))'a+(a^{[n]})''.$$ Which by the Leibniz rule equals
$$\sigma(a^{[n+1]}) b+\big(\sigma(a^{[n]}) a\big)'+(a^{[n]})'' = \sigma(a^{[n+1]}) b+\big(\sigma(a^{[n]}) a+ (a^{[n]})'\big)' = $$
$$\sigma(a^{[n+1]}) b+\big(a^{[n+1]}\big)' $$
As needed. \end{proof}

\begin{corollary}\label{cor1} Let $f \in D[x,\sigma,\delta]$. If $a$ interchanges with $f(a)$ then $f^{[n]}(a) = \big(f(a)\big)^{[n]}$ for all $n \geq 1$. \end{corollary}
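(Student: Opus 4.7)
The plan is to establish the equality $f^{[n]}(a) = (f(a))^{[n]}$ by induction on $n \geq 1$, mirroring the structure of the proof of Lemma \ref{good}, but with the role played there by \cite[Proposition 2.9(3)]{LL88} now taken by the more general Proposition \ref{phew}. Set $b = f(a)$ and write $f^{[n]} = \sum g_i x^i$, so that the claim becomes $f^{[n]}(a) = b^{[n]}$.

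The base case $n = 1$ is tautological. For the inductive step I assume $f^{[n]}(a) = b^{[n]}$ and decompose
$$f^{[n+1]}(a) = \big(\sigma(f^{[n]})f\big)(a) + \delta(f^{[n]})(a).$$
When $b \neq 0$, the Lam--Leroy product formula \cite[Theorem 2.7]{LL88} rewrites the first summand as $\sum \sigma(g_i)(a^b)^{[i]} b$, and applying Proposition \ref{phew} to each factor $(a^b)^{[i]} b = \sigma(a^{[i]}) b + (a^{[i]})'$ collapses it to $\sigma(b^{[n]}) b + \sum \sigma(g_i)(a^{[i]})'$. For the second summand, applying $\delta$ in $D$ to the induction hypothesis $b^{[n]} = \sum g_i a^{[i]}$ via the Leibniz rule gives
$$(b^{[n]})' = \sum \sigma(g_i)(a^{[i]})' + \delta(f^{[n]})(a).$$
Substituting this back yields $f^{[n+1]}(a) = \sigma(b^{[n]}) b + (b^{[n]})' = b^{[n+1]}$, as desired.

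The case $b = 0$ should be handled separately, in the same spirit as Lemma \ref{good} handled $a = 0$: the product formula now forces $\big(\sigma(f^{[n]})f\big)(a) = 0$, so one must verify $\delta(f^{[n]})(a) = 0 = b^{[n+1]}$. Here the interchange hypothesis is essential. Unpacking the definition of interchangeability with $b = 0$, either $a = b = 0$, in which case $f(0) = 0$ and Lemma \ref{good} itself already gives $f^{[n]}(0) = 0^{[n]} = 0$; or $a \neq 0$, $a$ is stable, and $a' = 0$, in which case Lemma \ref{zero_case} yields $(a^{[i]})' = 0$ for every $i$, so the Leibniz expansion of $0 = (b^{[n]})'$ collapses to $\delta(f^{[n]})(a) = 0$.

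I expect the main obstacle to be presenting the edge case $b = 0$ cleanly, since the Lam--Leroy product formula degenerates there and one must invoke each of the two clauses of the interchange definition in turn. The main $b \neq 0$ case is essentially a substitution of Proposition \ref{phew} into the scheme already worked out for Lemma \ref{good}, with $a$ played by $b$ on the right-hand side and the generalized conjugation $a^b$ appearing where $a^a$ did previously.
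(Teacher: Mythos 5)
Your proof is correct and follows essentially the same route as the paper's: induction on $n$, the Lam--Leroy product formula combined with Proposition~\ref{phew} when $f(a) \neq 0$, and Lemma~\ref{zero_case} together with the Leibniz expansion of the induction hypothesis in the degenerate case $f(a) = 0$. The only cosmetic differences are that you solve for $\delta(f^{[n]})(a)$ from the Leibniz rule applied to $b^{[n]} = \sum g_i a^{[i]}$ rather than regrouping the summands directly, and you dispatch the sub-case $a = f(a) = 0$ by citing Lemma~\ref{good} instead of observing $(a^{[i]})' = 0$ trivially.
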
\begin{proof} Let $b = f(a)$. For $n = 1$ there is nothing to prove. Suppose we have proven the claim for a given $n$. Write $g = f^{[n]}$ as $\sum g_i x^i$. 

Suppose first that $b = 0$. Then $$f^{[n+1]}(a) = (\sigma(g)f+\delta(g))(a) = 0+\delta(g)(a) = \sum g_i'a^{[i]}. $$Since $a$ interchanges with $b$, we have $a =0$ or $a' = 0$ and $a$ is stable. In both cases we have $\big(a^{[i]}\big)' = 0$ for all $i \geq 0$. Indeed, in the first case this holds trivially, and in the second case by Lemma \ref{zero_case}. Thus we may write:
$$f^{[n+1]}(a) = \sum g_i'a^{[i]} = \sum g_i^\sigma (a^{[i]})'+g_i'a^{[i]}.$$By the Leibniz rule we may write the right-hand side of the last equality as $(\sum g_i a^{[i]})' = (g(a))'$. By the induction hypothesis, $g(a) = 0^{[n]} = 0$, hence $f^{[n+1]}(a) = 0' = 0$. Thus $f(a)^{[n+1]} = 0^{[n+1]} = 0 =  f(a)^{[n+1]}$. 

Next, suppose that $b \neq 0$. For $n+1$ we have $$f^{[n+1]}(a) = (\sigma(g)f+\delta(g))(a) = \big(\sigma(g)(a^{b})\big)b+\delta(g)(a)=$$ $$\sum g_i^\sigma (a^b)^{[i]}b + \sum g_i' a^{[i]} = \sum \big(g_i^\sigma (a^b)^{[i]}b + g_i' a^{[i]}\big). $$
By Proposition \ref{phew}, the right-hand side of the last equality may be written as:
$$\sum \big(g_i^\sigma (a^b)^{[i]}b + g_i' a^{[i]}\big) = \sum \big(g_i^\sigma \sigma(a^{[i]})b +g_i^\sigma (a^{[i]})' + g_i'a^{[i]}\big) = $$ $$\sigma(g(a))b +\sum\big(g_i^\sigma (a^{[i]})' + g_i'a^{[i]}\big).$$ 
By the Leibniz rule, this expression equals: 
$$\sigma(g(a))b +\sum\big(g_i a^{[i]}\big)' = \sigma(g(a))b +g(a)'.$$By the induction hypothesis, this expression equals $\sigma(b^{[n]})b+\delta(b^{[n]}) = b^{[n+1]}$, as needed. \end{proof}

For $a \in D$, $f \in D[x,\sigma,\delta]$ let us define $f^{*n}(a)$ inductively by $f^{*0}(a)  = a$ and $f^{*(n+1)} = f(f^{*n}(a))$. We define the orbit of $a$ by $f$ as the sequence $\big(f^{\circ 0}(a),f^{\circ 1}(a),\ldots \big)$ (where also $f^{\circ 0}(a) = a$). We say that $a$ interchanges with a sequence $A$ (finite or infinite) in $D$ if $a$ interchanges with every element of $A$. 

\begin{theorem}\label{thm1} Let $n \in \N$. If $a$ interchanges with $\big(f^{\circ 0}(a), f^{\circ 1}(a),\ldots,f^{\circ (n-1)}(a)\big)$ or with $\big(f^{*0}(a), f^{*1}(a),\ldots,f^{*(n-1)}(a)\big)$ then $f^{\circ n}(a) = f^{* n}(a)$. \end{theorem}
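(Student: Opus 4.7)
The plan is to proceed by strong induction on $n$. The base cases $n=0$ and $n=1$ are immediate, since $f^{\circ 0}(a)=a=f^{*0}(a)$ and $f^{\circ 1}(a)=f(a)=f^{*1}(a)$ with no interchangeability required.

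For the inductive step, I would first handle the case where $a$ interchanges with $\bigl(f^{\circ 0}(a),\ldots,f^{\circ(n-1)}(a)\bigr)$. Setting $g=f^{\circ(n-1)}\in R$, the $k=n-1$ entry of the hypothesis is exactly that $a$ interchanges with $g(a)=f^{\circ(n-1)}(a)$, so Corollary \ref{cor1} applied to $g$ gives $g^{[i]}(a)=(g(a))^{[i]}$ for every $i\geq 0$. Writing $f=\sum_i f_i x^i$ and using that evaluation at $a$ is additive and left $D$-linear, I expect
$$f^{\circ n}(a)=\Bigl(\sum_i f_i\,g^{[i]}\Bigr)(a)=\sum_i f_i\,g^{[i]}(a)=\sum_i f_i\,(g(a))^{[i]}=f(g(a))=f\bigl(f^{\circ(n-1)}(a)\bigr).$$
The strong induction hypothesis applied at $k=n-1$ (whose premise is the length-$(n-1)$ prefix of the current hypothesis) gives $f^{\circ(n-1)}(a)=f^{*(n-1)}(a)$, so $f^{\circ n}(a)=f\bigl(f^{*(n-1)}(a)\bigr)=f^{*n}(a)$ by the recursive definition of $f^{*n}$.

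For the second case, suppose instead that $a$ interchanges with $\bigl(f^{*0}(a),\ldots,f^{*(n-1)}(a)\bigr)$. For each $k<n$ the length-$k$ prefix is still a hypothesis of the same type, so strong induction yields $f^{\circ k}(a)=f^{*k}(a)$ for every $k<n$. Substituting these equalities term by term converts the given list into $\bigl(f^{\circ 0}(a),\ldots,f^{\circ(n-1)}(a)\bigr)$, and the situation reduces to the first case. The only substantive step throughout is the identity $f^{\circ n}(a)=f\bigl(f^{\circ(n-1)}(a)\bigr)$, and that is really just a repackaging of Corollary \ref{cor1} with the definition of $\circ$; the rest is bookkeeping between the two flavors of hypothesis.
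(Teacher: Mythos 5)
Your proposal is correct and follows essentially the same route as the paper: both arguments reduce the step from $n-1$ to $n$ to Corollary \ref{cor1} applied to $g=f^{\circ(n-1)}$, using the induction hypothesis to identify $g(a)$ with $f^{*(n-1)}(a)$ so that the interchangeability hypothesis (in either form) applies to $g(a)$. The only cosmetic difference is that you split the two forms of the hypothesis into separate cases, whereas the paper observes at the outset that the induction hypothesis makes the two sequences coincide and treats them together.
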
\begin{proof} For $n = 1$  there is nothing to prove. Suppose by induction that we have proven the claim up to $n-1$, and let us prove it for $n$. By the induction hypothesis, the sequence$\big(f^{\circ 1}(a),\ldots,f^{\circ (n-1)}(a)\big)$ coincides with the sequence $\big(f^{* 1}(a),\ldots,f^{* (n-1)}(a)\big)$. In particular, $g = f^{\circ (n-1)}$ satisfies $g(a) = f^{*(n-1)}(a)$. By our assumptions, $a$ interchanges with $g(a)$, hence by Corollary \ref{cor1} $g^{[i]}(a) = \big(g(a)\big)^{[i]}$ for all $i \in \N$. Thus $$f^{\circ n}(a) = \sum f_i g^{[i]}(a) = \sum f_i (g(a))^{[i]} = f(g(a)) = f(f^{*(n-1)}(a)) = f^{*n}(a).$$ \end{proof}

For a given $r \in \N$ we shall say that $a$ is {\bf $r$-periodic} with respect to $f$ if $f^{\circ nr}(a) = a$ for all $n \in \N$. A point $a$ is a fixed point of $f$ if $a$ is $1$-periodic with respect to $f$. 

\begin{theorem}\label{periodic} Let $a \in D, f \in D[x,\sigma,\delta]$ and $r \in \N$.  Suppose that $f^{\circ r}(a) = a$ and that $a$ interchanges with $\big(f^{\circ 0}(a), f^{\circ 1}(a),\ldots,f^{\circ (r-1)}(a)\big)$. Then $a$ is $r$-periodic with respect to $f$ and $a$ interchanges with the whole orbit $\big(f^{\circ 1}(a),f^{\circ 2}(a),\ldots\big)$. \end{theorem}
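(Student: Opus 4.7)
The plan is to prove by induction on $m \in \N$ the combined statement that $f^{\circ m}(a) = f^{*m}(a)$ \emph{and} that $a$ interchanges with this common value. Once this is established, specializing to $m = nr$ will give the $r$-periodicity of $a$ with respect to $f$, since the functional orbit will turn out to be periodic with period dividing $r$, and the second conclusion of the theorem (interchange with the whole orbit) will be immediate.

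The first step is a bootstrap that pins down the functional orbit on its first period. For each $k \leq r$ the hypothesis supplies the interchange of $a$ with $\big(f^{\circ 0}(a),\ldots,f^{\circ (k-1)}(a)\big)$, so Theorem~\ref{thm1} yields $f^{\circ k}(a) = f^{*k}(a)$. In particular $f^{*r}(a) = f^{\circ r}(a) = a$, which shows that the functional orbit $\big(f^{*k}(a)\big)_{k \geq 0}$ satisfies $f^{*(k+r)}(a) = f^{*k}(a)$ for every $k \geq 0$.

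For the inductive step, assume the combined statement holds for every $k \leq m$, where $m \geq r-1$. Then $a$ interchanges with each element of $\big(f^{*0}(a),\ldots,f^{*m}(a)\big)$, since by the inductive hypothesis these agree with $\big(f^{\circ 0}(a),\ldots,f^{\circ m}(a)\big)$, and the interchange with each of these is also part of the inductive hypothesis. Hence the second branch of Theorem~\ref{thm1} applies at index $m+1$ and gives $f^{\circ (m+1)}(a) = f^{*(m+1)}(a)$. To close the induction, I still need $a$ to interchange with this value. This is where the functional periodicity is used: $f^{*(m+1)}(a) = f^{*((m+1) \bmod r)}(a)$, and the bootstrap identifies this with $f^{\circ j}(a)$ for some $j \in \{0,1,\ldots,r-1\}$, so the original hypothesis on the first $r$ formal iterates supplies the required interchange.

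The main technical subtlety is exactly this bootstrapping: the hypothesis of the theorem only supplies interchange up to index $r-1$ in the formal orbit, so one cannot immediately extend the induction past that range. The remedy is to first identify the formal and functional iterates on the initial segment $\{0,\ldots,r\}$ using Theorem~\ref{thm1}, then harvest the resulting periodicity of the functional orbit, and only then run the induction for all $m$. Once this is in place, the induction goes through mechanically, and specialization to multiples of $r$ yields $r$-periodicity.
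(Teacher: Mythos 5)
Your proposal is correct and follows essentially the same route as the paper's proof: identify the formal and functional iterates on the initial segment via Theorem~\ref{thm1}, deduce that the functional orbit is $r$-periodic, conclude that $a$ interchanges with every $f^{*k}(a)$, and then invoke the second branch of Theorem~\ref{thm1} to equate $f^{*k}(a)$ with $f^{\circ k}(a)$ for all $k$. The only cosmetic difference is that you package the final step as an explicit induction, whereas the paper applies Theorem~\ref{thm1} directly for each index once the interchange with the whole functional orbit is established.
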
 \begin{proof} By Theorem \ref{thm1} $f^{\circ i}(a) = f^{*i}(a)$ for $i = 1,\ldots,r$. In particular, $f^{\circ r}(a) = f^{*r}(a) = a$. It follows that $f^{*(nr+j)}(a) = f^{* j}(a)$ for all $n \in \N$ and $j \in \{0,1,\ldots,r-1\}$, hence $a$ interchanges with $f^{*k}(a)$ for all $k \in \N$. Thus by Theorem \ref{thm1} we have $f^{*k}(a) = f^{\circ k}(a)$ for all $k \in \N$. Thus $a$ interchanges with its orbit $\big(f^{\circ 1}(a),f^{\circ 2}(a),\ldots\big) = \big(f^{* 1}(a),f^{*2}(a),\ldots\big)$, and $f^{\circ nr}(a) = f^{*0}(a) = a$ for all $n \in \N$. \end{proof}

We note that in the special case where $\sigma = \operatorname{Id}$ and $\delta = 0$, Theorem \ref{periodic} coincides with Theorem 4.6 of \cite{CV21}. 

\begin{remark} Given that $f^{\circ r}(a) = a$, the condition that $a$ interchanges with $\big(f^{\circ 0}(a), f^{\circ 1}(a),\ldots,f^{\circ (r-1)}(a)\big)$ is only a sufficient condition for $a$ to be $r$-periodic with respect to $f$, not generally a necessary one. For example, let $R = \HH[x]$, where $\HH$ is the real quaternion algebra with basis $1,i,j,k$ over $\R$.  Let $f = j + k -x \in R$. It is easy to verify by induction that $f^{\circ n}(x) = x$ if $n$ is even, and $f^{\circ n}(x) = f$ if $n$ is odd. In particular, any point $a \in\HH$ is $2$-periodic. However, the point $a = k$, for example, does not commute (that is, does not interchange with) with $f(k) = j$. \end{remark}

Via Theorem \ref{periodic} we also get a different proof of Theorem \ref{thm:fixed}:

\begin{corollary} Let $a \in D, f \in D[x,\sigma,\delta]$. If $f(a) = a$ then $a$ is a fixed point of $f$. \end{corollary}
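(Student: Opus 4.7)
The plan is to deduce this statement directly from Theorem~\ref{periodic} by specializing to the case $r = 1$. With this choice the hypothesis $f^{\circ r}(a) = a$ becomes $f^{\circ 1}(a) = f(a) = a$, which is exactly the assumption at hand. The second hypothesis of Theorem~\ref{periodic} is that $a$ interchange with the length-$r$ sequence $\big(f^{\circ 0}(a), \ldots, f^{\circ (r-1)}(a)\big)$, which for $r = 1$ degenerates to the single requirement that $a$ interchange with $f^{\circ 0}(a) = a$, that is, with itself.

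The key observation is that the interchange relation is reflexive by definition: the clause ``either $a = b$'' in the definition of interchangeability means that $a$ always interchanges with $a$, irrespective of stability considerations or the identity $a^\sigma b + a' = b^\sigma a + b'$. Hence the two hypotheses of Theorem~\ref{periodic} are both in force, and its conclusion yields that $a$ is $1$-periodic with respect to $f$. By definition of $1$-periodicity this means $f^{\circ n}(a) = a$ for all $n \in \N$, which is exactly the assertion that $a$ is a fixed point of $f$.

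There is no genuine obstacle; the content of this corollary is merely to record that the more general periodicity theorem recovers Theorem~\ref{thm:fixed} essentially for free, bypassing the direct induction of Lemma~\ref{good} in favor of the interchangeability framework built via Proposition~\ref{phew} and Corollary~\ref{cor1}. The only subtlety worth flagging in the write-up is that one must explicitly invoke the $a = b$ clause of the definition of interchangeability to cover the degenerate case $r = 1$, since otherwise one would have to verify the stability condition on $a$, which is neither assumed nor needed here.
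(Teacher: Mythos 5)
Your proposal is correct and takes exactly the same route as the paper: apply Theorem~\ref{periodic} with $r=1$, using the reflexivity of the interchange relation (the $a=b$ clause of the definition) to satisfy the interchangeability hypothesis. Nothing further is needed.
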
 \begin{proof} Since $a$ interchanges with itself we may apply Theorem \ref{periodic} for $r = 1$ to get that $a$ is $1$-periodic with respect to $f$. \end{proof}

\providecommand{\bysame}{\leavevmode\hbox to3em{\hrulefill}\thinspace}
\providecommand{\MR}{\relax\ifhmode\unskip\space\fi MR }
\providecommand{\MRhref}[2]{%
  \href{http://www.ams.org/mathscinet-getitem?mr=#1}{#2}
}
\providecommand{\href}[2]{#2}

\end{document}